\newtheorem{theorem}{Theorem}
\newtheorem{proposition}[theorem]{Proposition}
\newtheorem{sublemma}[theorem]{Sublemma}
\newtheorem{conjecture}[theorem]{Conjecture}
\theoremstyle{definition}
\newtheorem{definition}[theorem]{Definition}
\newtheorem{notation}[theorem]{Notation}
\theoremstyle{remark}
\newtheorem{remark}[theorem]{Remark}
\newcommand{\al}{\alpha}
\renewcommand{\max}{{\rm Max}}
\renewcommand{\min}{{\rm Min}}
\newcommand{\ga}{\mathfrak{a}} 
\newcommand{\gb}{\mathfrak{b}}
\newcommand{\gm}{\mathfrak{m}}
\title{The EGH Conjecture and the Sperner property of complete intersections}
\author{T.\  Harima, Niigata University \\ Department of Mathematics Education, Niigata, 950-2181 Japan
\thanks{Supported by Grant-in-Aid for Scientific Research (C) (15K04812).} 
\and A.\ Wachi, Hokkaido University of Education \\  Department of Mathematics,  
\\ Kushiro, 085-8580 Japan
\thanks{Supported by Grant-in-Aid for Scientific Research (C) (23540179).} 
\and  J.\ Watanabe, Tokai University \\ Department of Mathematics, Hiratsuka, 259-1292 Japan
}
\begin{document}

\maketitle
\date{}

\def\pa{{\partial}}

\begin{abstract} 
Let $A$ be a graded complete intersection over a field and $B$ the monomial 
complete intersection 
with the generators of the same degrees as $A$. The EGH  conjecture says that if 
$I$ is a graded ideal in $A$, then there 
should be an ideal $J$ in $B$ such that $B/J$ and $A/I$ have the same Hilbert function.  
We show that if the EGH conjecture is true, then it can be used to prove that 
every graded complete intersection over any field has the Sperner property. 
\end{abstract}

\section{Introduction}
The Sperner theory for finite posets was started by the paper of Sperner~\cite{sperner} 
and was well established by 1960s.  
The theory served as a model for the thoery of the weak and strong Lefschetz properties of Artinian rings, 
which has been intensively    
studied in the recent years by many authors (\cite{david_cook_1}, \cite{david_cook_2}, 
\cite{HMMNWW},  \cite{MMMNW}, \cite{mcdaniel}, \cite{migliore_miro_roig}, \cite{migliore_nagel}). 
For the detail of the Sperner theory of finite posets we refer the reader to Greene--Kleitman Anderson~\cite{anderson},   
Aigner~\cite{aigner},  Bollob\'{a}s~\cite{bollobas}, Greene-Kleitman~\cite{greene_kleitman}, 
for example. 

A graded Artinian algebra $A$ is said to have the Sperner property if 
$\max\{\mu(\ga)\}$, where $\ga$ runs over all ideals in $A$,  is equal to the dimension of a homogeneous component $A_i$ of 
$A$ of the maximal size. 
It is known that the weak Lefschetz property immediately implies the Sperner property of $A$  
(\cite{Watanabe_2} Corollary, \cite{Watanabe_1} Prop~3.2.(3)).  

In this paper we introduce the notion of the matching property of Gorenstein algebras  
(Definition~\ref{def_matching_property}).    
This is a ring theoretic version of the 
matching property of posets as defined in Aigner~\cite{aigner} VIII \S3. 

Then we will prove that Gorenstein algebras with the 
matching property enjoy the Sperner property 
(Proposition~\ref{matching_implies_sperner}),  
as naturally expected from the definition.    
A reason we are interested in the matching property of Gorenstein algebras is that 
it has a relation to the EGH (Eisenbud-Green-Harris) conjecture (\cite{eisenbud_green_harris}) on complete intersections.    
In Theorem~\ref{main_theorem}  of the present paper we prove that if the EGH conjecture is true it implies that all 
graded complete intersections with a standard grading have the Sperner property.  
As a consequence it implies that all complete intersections for which the EGH conjecture have 
been proved true have the Sperner property. Theorem~\ref{ci_of_product_of_linear_forms} is such a case.   

It would be a good theorem if one could prove that all complete intersections have the Sperner property. 
Being weaker than the WLP, the Sperner property has one advantage over the WLP;  we can expect it does not depend on 
the characteristic of the ground field, while the WLP and SLP 
inescapably depend on the characteristic.   

Throughout this paper a field is used to mean any field (including finite fields) unless otherwise is specified.

\section{The Dilworth number of Artinian graded rings} 

\begin{notation}        
Let $V = \bigoplus _{i=0} ^{\infty} V_i$ be a graded vector space, where $\dim _K V_i < \infty$ for all $i$. 
The map $i \mapsto H(V,i) := \dim _K V_i$ is called the Hilbert function of $V$.  
\end{notation}

\begin{definition}
Let $A = \bigoplus _{i=0} ^{c} A_i$ be an Artinian graded algebra over a field $A_0=K$, and 
$\gm = \bigoplus _{i=1}^c A_i$ the maximal ideal of $A$.  
The {\bf Dilworth number}, denoted  $d(A)$,  is defined to be  
\[d(A):=\max\{\mu(\ga) \mid  \mbox{ ideals }\ga  \subset A \}.\]
We say that $A$ has the {\bf Sperner property} if 
\[d(A)=\max\{\dim _K A_i\mid i=0,1,2,\cdots \}.\] 

\noindent We denote by $\mu(\ga)$ and $\tau(\ga)$ respectively the minimal number of generators and the type of $\ga$.
Namely $\mu(\ga)= \dim _K \ga/\gm\ga$ and $\tau(\ga)=\dim _K (\ga:\gm)/\ga$.  
(These may apply to any local ring $(A, \gm)$ not necessarily Artinian.) 
\end{definition}

For an Artinian Gorenstein local ring $(A, \gm)$  
we introduce two families of ideals:
 \[{\cal F}(A):=\{\ga \mid \mu(\ga)=d(A)\}, \ \  {\cal G}(A):=\{\ga \mid \tau(\ga)=d(A)\}.\] 
If $A$ is graded, we assume that $\ga$ runs over all graded ideals of $A$ (although it does not make much difference).
The following result was proved in Ikeda-Watanabe~\cite{Ikeda_Watanabe_1}. 
\begin{proposition} \label{dilworth_lattice}
${\cal F}(A)$
and ${\cal G}(A)$ are posets with the inclusion as an order, and moreover these are 
lattices with respect to $+$ and $\cap$ as join and meet, and they are isomorphic as lattices via the correspondence:
\[{\cal F}(A) \leftrightarrow {\cal G}(A) \]
\[{\cal F}(A) \ni \ga  \mapsto \gm\ga  \in {\cal G}(A), \]
and 
\[{\cal G}(A) \ni \ga  \mapsto \ga : \gm  \in {\cal F}(A). \]
Furthermore the correspondence 
\[{\cal F}(A)   \to  {\cal G}(A) \]
defined by $\ga \mapsto 0:\ga$ gives an order reversing isomorphism of lattices. 
We assume that $A$ is Gorenstein so we have  $0:(0:\ga) = \ga$, which implies that the same correspondence $\ga \mapsto 0:\ga $ 
gives us the isomorphism in the opposite direction also. 

\end{proposition}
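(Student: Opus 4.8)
The plan is to base everything on Matlis duality for the Artinian Gorenstein ring $A$. Up to a degree shift in the graded case, the functor $(-)^{\vee}:=\Hom_A(-,A)$ realizes $\Hom_K(-,K)$: it is exact, it preserves $K$-dimension, it satisfies $(A/\ga)^{\vee}\cong 0:\ga$ and $\ga^{\vee}\cong A/(0:\ga)$ for every ideal $\ga$, and, because $A$ is Gorenstein, it obeys the double-annihilator law $0:(0:\ga)=\ga$ for every $\ga$.

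First I would translate the invariants $\mu$ and $\tau$ into one another. Dualizing $0\to\gm\ga\to\ga\to\ga/\gm\ga\to 0$ and using $0:(\gm\ga)=(0:\ga):\gm$ gives $(\ga/\gm\ga)^{\vee}\cong\bigl((0:\ga):\gm\bigr)/(0:\ga)$, hence $\mu(\ga)=\tau(0:\ga)$; substituting $0:\ga$ for $\ga$ and using $0:(0:\ga)=\ga$ gives $\tau(\ga)=\mu(0:\ga)$. Since $\ga\mapsto 0:\ga$ is an inclusion-reversing bijection on the set of (homogeneous) ideals, this yields $\max\{\mu(\ga)\}=\max\{\tau(\ga)\}=d(A)$ together with the equivalence $\ga\in{\cal F}(A)\Leftrightarrow 0:\ga\in{\cal G}(A)$; thus $\ga\mapsto 0:\ga$ is an order-reversing bijection ${\cal F}(A)\leftrightarrow{\cal G}(A)$ equal to its own inverse. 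Combined with $0:(\ga+\gb)=(0:\ga)\cap(0:\gb)$ and $0:(\ga\cap\gb)=(0:\ga)+(0:\gb)$ (the second again from the double-annihilator law) this will give the asserted order-reversing lattice isomorphism as soon as the lattice structure is known.

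Next I would show that ${\cal F}(A)$ and ${\cal G}(A)$ are closed under $+$ and $\cap$ inside the lattice of ideals. Tensoring the exact sequence $0\to\ga\cap\gb\to\ga\oplus\gb\to\ga+\gb\to 0$ with $K$ gives $\mu(\ga+\gb)+\mu(\ga\cap\gb)\ge\mu(\ga)+\mu(\gb)$, and applying $\Hom_A(K,-)$ to the exact sequence $0\to A/(\ga\cap\gb)\to A/\ga\oplus A/\gb\to A/(\ga+\gb)\to 0$ gives the same inequality with $\tau$ in place of $\mu$. If $\ga$ and $\gb$ both attain the maximum $d(A)$ of $\mu$ (resp.\ of $\tau$), then each of $\mu(\ga+\gb)$, $\mu(\ga\cap\gb)$ (resp.\ the corresponding $\tau$-values) is at most $d(A)$ while their sum is at least $2d(A)$, so both equal $d(A)$. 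Hence ${\cal F}(A)$ and ${\cal G}(A)$ are sublattices of the ideal lattice, each is a lattice with join $+$ and meet $\cap$, and the order-reversing bijection of the previous step is an anti-isomorphism of these lattices.

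Finally I would verify that $\ga\mapsto\gm\ga$ and $\ga\mapsto\ga:\gm$ restrict to mutually inverse maps between ${\cal F}(A)$ and ${\cal G}(A)$. From $\ga\subseteq(\gm\ga):\gm$ one obtains $\tau(\gm\ga)\ge\mu(\ga)$, so $\ga\in{\cal F}(A)$ forces $\tau(\gm\ga)=d(A)$, i.e.\ $\gm\ga\in{\cal G}(A)$, and equality of the two $K$-dimensions forces $(\gm\ga):\gm=\ga$; dually, from $\gm(\gb:\gm)\subseteq\gb$ one obtains $\mu(\gb:\gm)\ge\tau(\gb)$, so $\gb\in{\cal G}(A)$ forces $\gb:\gm\in{\cal F}(A)$ and $\gm(\gb:\gm)=\gb$. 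These two maps are order preserving with order-preserving inverses, hence order isomorphisms between the lattices ${\cal F}(A)$ and ${\cal G}(A)$, hence lattice isomorphisms. The crux is the pair of ``supermodular'' inequalities in the third paragraph; once those are in hand the rest is formal manipulation with Matlis duality, and the graded case is handled identically after replacing $A$ by its graded injective hull.
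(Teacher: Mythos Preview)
Your argument is correct. The duality identities $\mu(\ga)=\tau(0:\ga)$ and $\tau(\ga)=\mu(0:\ga)$ follow exactly as you say from the self-injectivity of $A$ and the double-annihilator law; the supermodular inequalities $\mu(\ga+\gb)+\mu(\ga\cap\gb)\ge\mu(\ga)+\mu(\gb)$ and its $\tau$-analogue are obtained precisely from right-exactness of $-\otimes_AK$ and left-exactness of $\Hom_A(K,-)$ applied to the two short exact sequences you name; and the verification that $\ga\mapsto\gm\ga$ and $\gb\mapsto\gb:\gm$ are mutually inverse on ${\cal F}(A)$ and ${\cal G}(A)$ via the inequalities $\tau(\gm\ga)\ge\mu(\ga)$ and $\mu(\gb:\gm)\ge\tau(\gb)$ is clean and complete.

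As for comparison with the paper: the paper does not actually prove this proposition. It is stated with the sentence ``The following result was proved in Ikeda--Watanabe~\cite{Ikeda_Watanabe_1}'' and no argument is given in the text. So you have supplied a self-contained proof where the authors only quote one. Your approach---Matlis duality to interchange $\mu$ and $\tau$, then the supermodular inequality to get closure under $+$ and $\cap$---is in fact the standard route and is essentially what one finds in the cited reference; there is no substantive divergence to report.
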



\section{Matching property of monomial complete intersections} 
\label{new_definition}.

\begin{notation}
Let $A=\bigoplus _{i=0}^{\infty} A_i$ be a graded algebra over a field $K=A_0$. 
For a subset $V \subset A_k$, we denote by $A_1\cdot V$ the vector subspace in $A_{k+1}$
spanned by the set 
\[\{xv \mid x \in A_1, v \in V\}.\] 
\end{notation}

\begin{definition} \label{def_matching_property}
Let $A=\bigoplus _{i=0}^c A_i$ be a graded Artinian Gorenstein algebra over a field $K=A_0$. 
We say that $A$ has the {\bf matching property} if we have 
\[\dim _K V \leq \dim _K (A_1 \cdot V) \] 
for any vector space $V \subset A_j$ with $j$ such that $\dim _K A _j \leq \dim _K A_{j+1}$. 
\end{definition}

Over a field of characteristic zero all monomial complete intersections have the WLP, which implies trivially 
the matching property of monomial complete intersections. In Proposition~\ref{prop_matching_property} we show that the 
matching property holds for all monomial complete intersections over a field of any characteristic. 

Let $N$ be a positive integer, and let $P(N)$ be the set of all positive divisors of $N$. Recall that a divisor lattice is 
the set $P(N)$ for some $N$ endowed with a structure of a poset where the partial order is the divisibility.  
It is a poset with rank function, hence we have the rank decomposition $P=\bigsqcup _{i=0}^c P_{i}$. For details see 
\cite{anderson} or \cite{aigner}.   

\begin{proposition} \label{matching_property_of_chain_product}  
Let $P=\bigsqcup _{i=0}^c P_{i}$ be a divisor lattice. 
For a subset  $S \subset P_j$, define the neighbor $N(S)$ of $S$ to be 
\[N(S)=\{y \in P_{j+1} \mid \exists x \in S \mbox{ such that } x < y\}.\]
Then if $|P_j| \leq |P_{j+1}|$, then for any $S \subset P_j$, we have  
\[|S| \leq |N(S)|.\]
\end{proposition}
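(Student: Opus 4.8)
The plan is to reduce the statement to the classical symmetric chain decomposition of divisor lattices. Writing $N = p_1^{a_1}\cdots p_k^{a_k}$ for the prime factorization, the divisor lattice $P = P(N)$ is isomorphic, as a poset with rank function, to the product of chains $[0,a_1]\times\cdots\times[0,a_k]$, the rank of an element being the sum of its coordinates and $c = a_1+\cdots+a_k$. By the theorem of de Bruijn, Tengbergen and Kruyswijk (see \cite{anderson}, \cite{aigner}), this poset admits a symmetric chain decomposition: there is a partition $P = \bigsqcup_{\alpha} C_\alpha$ into saturated chains, each $C_\alpha$ running from some level $r_\alpha$ to the symmetric level $c - r_\alpha$, so that $C_\alpha$ contains exactly one element of $P_i$ for $r_\alpha \le i \le c - r_\alpha$ and none of the other levels. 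I would simply cite this decomposition; it is the only substantive input.

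The key point is the observation that \emph{if $|P_j| \le |P_{j+1}|$, then every chain $C_\alpha$ of the decomposition that meets $P_j$ also meets $P_{j+1}$}. To see this, note that $C_\alpha$ meets $P_i$ exactly when $r_\alpha \le i$ and $r_\alpha \le c-i$. If some $C_\alpha$ met $P_j$ but not $P_{j+1}$, then, since $r_\alpha \le j < j+1$, the failure at level $j+1$ forces $r_\alpha > c-j-1$, hence (with $r_\alpha \le c-j$) $r_\alpha = c-j$, and then $r_\alpha \le j$ gives $c-j \le j$. In that range a chain reaches $P_j$ iff its bottom index is $\le c-j$ and reaches $P_{j+1}$ iff its bottom index is $\le c-j-1$; so $|P_{j+1}|$ counts strictly fewer chains than $|P_j|$ (the chain $C_\alpha$ itself being counted in the latter but not the former), contradicting $|P_j| \le |P_{j+1}|$. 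This handles uniformly the boundary behaviour near $i = c/2$, which is essentially the only thing one has to be careful about.

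Granting the observation, the proposition follows immediately. Suppose $|P_j| \le |P_{j+1}|$ and let $S \subseteq P_j$. Each $x \in S$ lies in a unique chain $C_{\alpha(x)}$, which by the observation also contains a unique element $\ffi(x) \in P_{j+1}$; since $C_{\alpha(x)}$ is saturated, $x$ is covered by $\ffi(x)$ in $P$, so $x < \ffi(x)$ and hence $\ffi(x) \in N(S)$. The map $x \mapsto \ffi(x)$ is injective, since two elements of $S$ in the same chain would both lie in $P_j$ and so coincide. Therefore $|S| = |\ffi(S)| \le |N(S)|$. (Alternatively, one could quote that divisor lattices satisfy the normalized matching property, which gives $|N(S)|/|P_{j+1}| \ge |S|/|P_j|$ and hence $|N(S)| \ge |S|$ directly; but the chain-decomposition argument is elementary and self-contained, and exhibits an explicit injection.) I do not expect any genuine obstacle here beyond the bookkeeping in the key observation.
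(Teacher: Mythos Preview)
Your argument is correct and follows essentially the same route as the paper: the proposition is deduced from the symmetric chain decomposition of divisor lattices due to de~Bruijn, Tengbergen and Kruyswijk, which the paper simply cites while you spell out the details of how an injection $S \hookrightarrow N(S)$ is read off from the chains. The paper also notes, as an alternative derivation, that the result follows from the weak Lefschetz property of monomial complete intersections in characteristic zero.
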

\begin{proof}  
This follows from the theorem which says $P$ has a symmetric chain decomposition. (See deBruijn et.\ al.\ ~\cite{debruijn_kruyswiek_tengbergen}.)
This also follows from the fact that the monomial complete intersections over a field of characteristic zero have 
the WLP. (See \cite{HMMNWW}.)  
\end{proof}
%


\begin{proposition} \label{prop_matching_property}
Any monomial complete intersection over a field    
has the matching property. 
\end{proposition}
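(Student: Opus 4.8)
The plan is to deduce the (linear-algebraic) matching property of a monomial complete intersection from the purely combinatorial Proposition~\ref{matching_property_of_chain_product}, using Hall's marriage theorem as the bridge and an elementary leading-term computation to handle arbitrary subspaces.

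Write $B=K[x_1,\dots,x_n]/(x_1^{a_1},\dots,x_n^{a_n})$, with $K$-basis the set of standard monomials $x_1^{\al_1}\cdots x_n^{\al_n}$, $0\le\al_i<a_i$. Ordered by divisibility these form a poset $P$ equal to the product of chains $C_{a_1}\times\cdots\times C_{a_n}$, hence a divisor lattice, with rank decomposition $P=\bigsqcup_{i=0}^{c}P_i$ in which $P_i$ is the set of degree-$i$ standard monomials and $|P_i|=\dim_K B_i$. So the hypothesis $\dim_K B_j\le\dim_K B_{j+1}$ of Definition~\ref{def_matching_property} is exactly $|P_j|\le|P_{j+1}|$, and for such $j$ Proposition~\ref{matching_property_of_chain_product} gives $|S|\le|N(S)|$ for every $S\subseteq P_j$. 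First I would observe that this is precisely Hall's condition for the bipartite graph on $P_j\sqcup P_{j+1}$ whose edges are the pairs $\{m,\,x_im\}$ with $x_im\in P_{j+1}$; Hall's marriage theorem then yields an injective map $\phi\colon P_j\hookrightarrow P_{j+1}$ with $m\mid\phi(m)$, and since $\deg\phi(m)=\deg m+1$ we may write $\phi(m)=x_{i(m)}m$ for a suitable index $i(m)$.

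Next I would fix a monomial order $<$ on $K[x_1,\dots,x_n]$ and, for $0\ne f\in B$ written in the standard-monomial basis, let $\mathrm{LT}(f)$ be its largest standard monomial; for a subspace $W\subseteq B_d$ set $\mathrm{LT}(W)=\{\mathrm{LT}(f):0\ne f\in W\}$, so $|\mathrm{LT}(W)|=\dim_K W$ (pass to an echelon basis). The single computational point is the claim that if $\mathrm{LT}(f)=m$ and $x_im$ is again standard, then $\mathrm{LT}(x_if)=x_im$: multiplying the standard-monomial expansion of $f$ by $x_i$ and deleting the terms that land in $(x_1^{a_1},\dots,x_n^{a_n})$ leaves $x_im$ as the unique largest surviving monomial, still with nonzero coefficient (this is where, incidentally, characteristic plays no role). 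Granting this, let $V\subseteq B_j$ be arbitrary; for each $m\in\mathrm{LT}(V)$ choose $f_m\in V$ with $\mathrm{LT}(f_m)=m$. Since $\phi(m)=x_{i(m)}m$ is standard, $\mathrm{LT}(x_{i(m)}f_m)=\phi(m)\in\mathrm{LT}(B_1\cdot V)$, and the injectivity of $\phi$ forces $\dim_K(B_1\cdot V)=|\mathrm{LT}(B_1\cdot V)|\ge|\mathrm{LT}(V)|=\dim_K V$, which is the matching property.

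The genuine obstacle is the very step this argument is designed around: upgrading the set-theoretic inequality of Proposition~\ref{matching_property_of_chain_product} to the vector-space inequality $\dim_K V\le\dim_K(A_1\cdot V)$ for \emph{all} $V$, not merely monomial ones. Hall's theorem produces the matching $\phi$ uniformly, and the leading-term trick transfers it to arbitrary subspaces; apart from the short verification of the leading-term claim, the remaining steps are bookkeeping.
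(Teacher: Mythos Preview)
Your argument is correct. The paper takes a different route: given $V\subset A_j$, it passes to the ideal $I=VA$, lifts to $R$, replaces the preimage $\mathfrak{b}$ by its initial ideal $\mathrm{In}(\mathfrak{b})$ (which still contains $(x_1^{a_1},\ldots,x_n^{a_n})$ and has the same Hilbert function), and thereby reduces to a \emph{monomial} ideal $J\subset A$; then it simply applies the set inequality $|S|\le|N(S)|$ of Proposition~\ref{matching_property_of_chain_product} to $S=J\cap P_j$ to get $H(J,j)\le H(J,j+1)$, hence $H(I,j)\le H(I,j+1)$. So both proofs use a monomial order to trade an arbitrary subspace for monomials, but the paper does this globally via the initial-ideal device and never invokes Hall's theorem, whereas you first upgrade the neighbourhood inequality to an explicit matching $\phi$ and then lift it elementwise through leading terms. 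Your approach is a bit longer but more self-contained (no appeal to the fact that passing to $\mathrm{In}(\mathfrak{b})$ preserves Hilbert functions) and produces an explicit injection $\mathrm{LT}(V)\hookrightarrow\mathrm{LT}(B_1\cdot V)$; the paper's route is shorter and leans on a standard Gr\"obner-basis tool off the shelf.
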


\begin{proof} 
Let $R=K[x_1, x_2, \cdots, x_n]$ be the polynomial ring over a field $K$ 
and write $A=R/\ga=\bigoplus _{i=0}^cA_i$, $\ga=(x_1^{a_1}, x_2^{a_2}, \cdots, x_n^{a_n})$. 
Let $V \subset A_j$ be a vector space. 
We have to show that $\dim _K V \leq \dim _K (A_1 \cdot V)$ if $\dim _K A_{j} \leq \dim _K A_{j+1}$. 
Let $I$ be the ideal of $A$ which is generated by $V$. Then we have  
\[\dim _K V=H(I, j), \ \  \dim _K (A_1\cdot V) = H(I,j+1).\]  
Recall that there exists a monomial ideal $J$ in $A$ such that $A/I$ and $A/J$ have the 
same Hilbert function. 
To see this, let $\phi: R \to A$ be the natural surjection.  Then $\gb:=\phi^{-1}(I)$ contains $\ga$. 
Let ${\rm In}(\gb)$ be the ideal generated by the set of monomials 
that occur as the head term of an element  in $\gb$ w.r.t.\ some monomial order,   
and let $J$ be the image of ${\rm In}(\gb)$ under $\phi$.   
It is well known that   $R/\gb$ and $R/\rm{In}(\gb)$ have the same Hilbert function.  Furthermore 
we have that ${\rm In}(\gb) \supset  \ga$, since monomials are headterms of themselves.  
Since $R/\gb=A/I$ and $R/{\rm In}(\gb)=A/J$, it suffices to prove  $H(J,j) \leq H(J, j+1)$. 
Note that $A$ has the monomial basis $P=\sqcup _{i=0}^c P_i$, which has a structure of the divisor lattice. 
Assume that  $\dim _K A_j \leq \dim _K A_{j+1}$, or equivalently,  $|P_j| \leq |P_{j+1}|$. 
Put $S=J \cap P_j$ and let $J'$ be the ideal generated by $S$.      
Then by Proposition~\ref{matching_property_of_chain_product}, we have 
$\mu(J')=|S| \leq |N(S)|= \mu(\gm J')$, hence $H(J, j) \leq H(J, j+1)$.  
Thus we have proved that $H(I, j) \leq H(I, j+1)$.      
\end{proof}

\section{The Sperner property of Artinian Gorenstein rings}   \label{section_for_sperner_property}

\begin{proposition}  \label{matching_implies_sperner}
Let $A=\bigoplus _{i=0}^c A_i$ be a graded Artinian Gorenstein algebra over a field $A_0=K$ with 
a unimodal Hilbert function. 
Assume that $A$ has the matching property. Then $A$ has the Sperner property.
\end{proposition}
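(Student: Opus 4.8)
The plan is to translate the poset-theoretic proof of "matching property implies Sperner" into ring-theoretic language, using the Gorenstein duality encoded in Proposition~\ref{dilworth_lattice}. First I would recall that, by definition, $d(A) = \max\{\mu(\ga)\}$ over all graded ideals $\ga$, and that we must show $d(A) = \max_i \dim_K A_i$. Since any principal ideal $\gm^i$-type construction — more precisely, the ideal $(A_i)$ generated by a full homogeneous component — has at least $\dim_K A_i$ generators (in fact exactly $\dim_K A_i$ when $A_i$ is a minimal-degree piece, but in general one takes the ideal generated by $A_i$ and counts its degree-$i$ part), the inequality $d(A) \geq \max_i \dim_K A_i$ is the easy direction: take $i_0$ with $\dim_K A_{i_0}$ maximal and consider $\ga = (A_{i_0})$; then $\mu(\ga) \geq \dim_K(\ga/\gm\ga)_{i_0} = \dim_K A_{i_0}$.

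The content is the reverse inequality $d(A) \leq \max_i \dim_K A_i$. I would pick a graded ideal $\ga$ achieving $\mu(\ga) = d(A)$, so $\ga \in {\cal F}(A)$. Write the minimal generators of $\ga$ by degree: $\mu(\ga) = \sum_j m_j$ where $m_j = \dim_K (\ga/\gm\ga)_j$ is the number of generators in degree $j$. The idea is to push generators toward the "middle" degree where the Hilbert function of $A$ peaks, using the matching property to control how the minimal generator count behaves under multiplication by $A_1$. Concretely, if $j$ is a degree with $\dim_K A_j \leq \dim_K A_{j+1}$ and $V \subset A_j$ is the span of the degree-$j$ minimal generators of $\ga$ together with $(\gm\ga)_j$, the matching property gives $\dim_K V \leq \dim_K (A_1 \cdot V) \leq \dim_K(\gm\ga + \ga)_{j+1}$-type bound, letting one trade degree-$j$ generators for a configuration supported in degree $j+1$ without decreasing $\mu$; symmetrically, on the descending side of the unimodal Hilbert function one uses the dual statement via the order-reversing isomorphism $\ga \mapsto 0:\ga$ of Proposition~\ref{dilworth_lattice}, which converts "$\dim A_j \leq \dim A_{j+1}$" on one side into the matching hypothesis on the other. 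Iterating, one concludes that there is an ideal $\ga'$ with $\mu(\ga') \geq \mu(\ga)$ all of whose minimal generators sit in a single degree $i_0$ near the peak, whence $d(A) = \mu(\ga) \leq \mu(\ga') \leq \dim_K A_{i_0} \leq \max_i \dim_K A_i$.

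The main obstacle I anticipate is making the "move a generator up one degree" step rigorous: multiplying the generating set of $\ga$ by $A_1$ does not literally produce a minimal generating set of a new ideal, and one must argue that the resulting module in degree $j+1$ can be completed to (the degree-$(j+1)$ slice of) an ideal whose total minimal-generator count is at least as large. This is where the lattice structure of ${\cal F}(A)$ and ${\cal G}(A)$ — closure under $+$ and $\cap$ — and the perfect pairing between $\mu$ on ${\cal F}(A)$ and $\tau$ on ${\cal G}(A)$ should be invoked to guarantee that the extremal ideal we land on still lies in ${\cal F}(A)$; unimodality of $H(A,-)$ is what ensures the inductive "flow" never has to cross a degree where the matching hypothesis fails. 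I would organize the induction on the distance of the generator degrees from the peak, handling the two sides of the peak by the self-dual symmetry so only one inequality genuinely needs proof.
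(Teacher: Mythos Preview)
Your overall strategy matches the paper's: take $\ga \in {\cal F}(A)$, push its generators toward the peak $j_0$ using the matching property on one side and Gorenstein duality for the other, and conclude that $\gm^{j_0} \in {\cal F}(A)$. But the step you flag as the obstacle is exactly where you are missing the paper's key observation, and your worry that ``multiplying by $A_1$ does not literally produce a minimal generating set'' is a sign you are looking at the wrong construction.

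The paper does not multiply and then complete to an ideal; it simply \emph{truncates}. If $\alpha$ is the initial degree of $I$ and $\alpha < j_0$, set $I' = \bigoplus_{j \geq \alpha+1} I_j$. This is already an ideal (it equals $I \cap \gm^{\alpha+1}$), and since $I$ and $I'$ have identical homogeneous pieces in degrees $\geq \alpha+1$, a two-line comparison of their graded minimal-generator counts gives
\[
\mu(I') - \mu(I) \;=\; \dim_K (A_1 \cdot I_\alpha) - \dim_K I_\alpha \;\geq\; 0,
\]
the last inequality being exactly the matching property applied to $V = I_\alpha \subset A_\alpha$. Iterating raises the initial degree to at least $j_0$ while staying in ${\cal F}(A)$. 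This Sublemma is the missing ingredient that replaces your vague ``trade degree-$j$ generators for a configuration in degree $j+1$'' step; no appeal to the lattice closure of ${\cal F}(A)$ under $+$ and $\cap$ is needed here.

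The duality step is also deployed differently than you suggest. The paper does not handle the descending side of the peak by a symmetric matching argument. Instead, once the initial degree of $I$ is $\geq j_0$, one passes to $J = 0:\gm I \in {\cal F}(A)$ and observes $J \supset 0:\gm^{j_0+1} = \gm^{c-j_0} \supset \gm^{j_0}$, the last inclusion because unimodality and symmetry force $c - j_0 \leq j_0$. Now $J$ has initial degree $\leq j_0$ \emph{and} contains $\gm^{j_0}$; applying the truncation Sublemma again pushes the initial degree up to $j_0$ while preserving the containment, so the resulting $J' \in {\cal F}(A)$ satisfies $J' = \gm^{j_0}$ and hence $d(A) = \dim_K A_{j_0}$.
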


\begin{proof} 
Let $I \subset A$ be an ideal with $\mu(I)=d(A)$, the Dilworth number of $A$. So $I \in {\cal F}(A)$. 
Let $j_0=\min\{j \mid \dim _K A_j > \dim _K A _{j+1}\}$. 
We are going to show that $\mu(I) =\dim _K A_{j_0}$.   
By Watanabe~\cite{Watanabe_1} Lemma~2.4 we may assume that $I$ is graded. 

Let $\al$ be the initial degree of $I$. 
We treat two cases; $\al \geq j_0$ and $\al < j_0$.  

Case 1.   Assume that $\al \geq j_0$.   
Then $\gm ^{j_0} \supset I$ and $\gm ^{j_0+1} \supset \gm I$. 
Put  $J= (0:\gm I)$. Then, bearing in mind that $A$ is Gorenstein, $J \supset 0:\gm ^{j_0+1} = \gm ^{c-j_0}$, and  
thanks to Proposition~\ref{dilworth_lattice},  
we have $J \in {\cal F}(A)$.   
Since the Hilbert function of $A$ is unimodal, we have  $c-j_0 \leq j_0$.  This means 
that $J \supset \gm ^{j_0}$ and the initial degree of $J$ is smaller than or equal to 
$j_0$. If the initial degree of $J$ equals $j_0$, then $J=\gm ^{j_0}$. Otherwise 
we  can use Sublemma~\ref{important_sublemma} repeatedly to obtain an ideal 
$J' \in {\cal F}(A)$ whose initial degree is $j_0$ which contains 
$\gm ^{j_0}$.  This means that $J'=\gm ^{j_0}$ and $\mu(J')= \dim _K A_{j_0}$.  

Case 2. Assume that $\al < j_0$. Then we use again Sublemma~\ref{important_sublemma} repeatedly and 
we obtain an ideal $I' \in {\cal F}(A)$ such that the initial degree of $I'$ is 
at least $j_0$. So this case reduces to Case 1.  
\end{proof}

\begin{sublemma}\label{important_sublemma}  
With the same notation as Proposition~\ref{matching_implies_sperner} suppose that  
$I=\bigoplus _{j \geq \al} I _j$ is a graded ideal of $A$ and  
$\al$ is the initial degree.  Put $I'=\bigoplus _{j \geq \al +1} I _j$.   
Let $j_0=\min\{j \mid \dim _K A_j > \dim _K A _{j+1}\}$ and assume that $\al < j_0$. 
Then we have $\mu(I) \leq \mu(I')$. 
Hence if $I \in {\cal F}(A)$, then $I' \in {\cal F}(A)$. 
\end{sublemma}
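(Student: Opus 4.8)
The plan is to compare the minimal generators of $I$ and $I'$ degree by degree and then apply the matching property of $A$ in the single degree $\al$. First I would record the easy structural facts. Since $I$ is an ideal, $A_1\cdot I_j\subseteq I_{j+1}$ for all $j$, and because $I_j=I'_j$ for $j\ge\al+1$ this shows that $I'$ is a graded ideal, with initial degree $\al+1$ (unless $I'=0$); moreover $\gm I\subseteq I'$. For a graded ideal $\gb=\bigoplus_d\gb_d$ of $A$ put $\mu_d(\gb)=\dim_K\gb_d-\dim_K(\gm\gb)_d$, so that $\mu(\gb)=\sum_d\mu_d(\gb)$.

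Next I would do the bookkeeping. Because $I$ has initial degree $\al$, we have $(\gm I)_\al=0$, so $\mu_\al(I)=\dim_K I_\al$, and $(\gm I)_{\al+1}=A_1\cdot I_\al$, so $\mu_{\al+1}(I)=\dim_K I_{\al+1}-\dim_K(A_1\cdot I_\al)$. For $I'$, which has initial degree $\al+1$, we get $\mu_\al(I')=0$ and $\mu_{\al+1}(I')=\dim_K I_{\al+1}$. For every $d\ge\al+2$ we have $I_d=I'_d$ and $(\gm I')_d\subseteq(\gm I)_d$, hence $\mu_d(I')\ge\mu_d(I)$. Adding these contributions up gives
\[
\mu(I)-\mu(I')\ \le\ \dim_K I_\al-\dim_K(A_1\cdot I_\al).
\]
(When $A$ is standard graded the higher-degree terms agree and this is an equality, but only the inequality is needed.) Now the hypothesis $\al<j_0$ enters: by definition of $j_0$ it forces $\dim_K A_\al\le\dim_K A_{\al+1}$, so the matching property of $A$, applied to the subspace $V=I_\al\subseteq A_\al$, yields $\dim_K I_\al\le\dim_K(A_1\cdot I_\al)$. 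Combined with the display this gives $\mu(I)\le\mu(I')$. For the last assertion, if $I\in{\cal F}(A)$ then $\mu(I)=d(A)$, which is the maximal value of $\mu$ over all ideals, so $d(A)\le\mu(I')\le d(A)$, whence $\mu(I')=d(A)$ and $I'\in{\cal F}(A)$.

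The point that needs care is the generator count in the middle step: deleting the bottom layer $I_\al$ removes the $\dim_K I_\al$ generators living in degree $\al$ but also removes the relations they impose one step up, freeing $\dim_K(A_1\cdot I_\al)$ generators in degree $\al+1$, while in every higher degree the number of minimal generators cannot decrease because $(\gm I')_d\subseteq(\gm I)_d$. One can phrase this intrinsically by tensoring the short exact sequence $0\to I'\to I\to I_\al\to 0$ — with $I_\al$ placed in degree $\al$ and annihilated by $\gm$, which is legitimate since $\gm I\subseteq I'$ — with $K$ over $A$, and observing that the connecting homomorphism ${\rm Tor}_1^A(I_\al,K)\to I'/\gm I'$ is in degree $\al+1$ the multiplication map $A_1\otimes_K I_\al\to I_{\al+1}$, whose image is $A_1\cdot I_\al$. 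Either way, the entire substance of the argument is the single invocation of the matching property.
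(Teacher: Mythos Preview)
Your proof is correct and follows essentially the same approach as the paper: write out the degree-by-degree contributions to $\mu(I)$ and $\mu(I')$, observe that they agree (or, as you note, compare favorably) above degree $\al+1$, and reduce the comparison in degrees $\al$ and $\al+1$ to the single inequality $\dim_K I_\al\le\dim_K(A_1\cdot I_\al)$ supplied by the matching property. Your extra care with the inequality $(\gm I')_d\subseteq(\gm I)_d$ and the Tor interpretation are nice touches but not needed in the standard-graded setting the paper works in, where the higher-degree terms are literally equal.
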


\begin{proof}
Note that 
\[\mu(I)= \dim _K I_{\al} + \dim _K I_{\al +1}/(A_1\cdot I_{\al}) + \cdots + \dim _K I_{c}/(A_1\cdot I_{c-1}), \]
and 
\[\mu(I')= \dim _K I_{\al +1} + \dim _K I_{\al + 2}/(A_1\cdot I_{\al +1}) + \cdots + \dim _K I_{c}/(A_1\cdot I_{c-1}). \]
Since $A$ has the matching property, we have  
\begin{eqnarray*}
\mu(I')-\mu(I) & = & \dim _KI_{\al + 1} - \left\{\dim _K I_{\al} + \dim _K I_{\al +1}/(A_1\cdot I_{\al})\right\}\\
                 & = &  \{\dim _K I_{\al +1} - \dim _K I_{\al +1}/(A_1 \cdot I_{\al})\} - \dim _K I_{\al} \\ 
                 & = & \dim _KA_1\cdot I_{\al} - \dim _K I_{\al}  \geq 0.
\end{eqnarray*}
\end{proof}

\section{Main result} \label{sect_main_result}

In this section we show that the EGA Conjecture implies the Sperner property of  complete intersections.  
Recall that the EGH Conjecture is as follows: 
\begin{conjecture}[EGH Conjecture~\cite{eisenbud_green_harris}] \label{egh_conjecture} 
Let  $R=K[x_1, x_2, \cdots, x_n]$ be the polynomial ring over a field $K$.  
If $I$ is a graded ideal in $R$ containing a regular sequence $f_1, f_2, \cdots, f_n$ of 
degrees $a_1, a_2, \cdots, a_n$ respectively, then $I$ has the same Hilbert function as an 
ideal containing $(x_1^{a_1}, x_2^{a_2}, \cdots, x_n^{a_n})$.   
\end{conjecture}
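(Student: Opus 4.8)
The plan is to reduce the conjecture to a purely combinatorial statement about monomial ideals and then to invoke the Clements--Lindström theorem, which is the analogue of Macaulay's theorem for quotients by pure powers. Write $B=R/(x_1^{a_1},\cdots,x_n^{a_n})$ and order the exponents so that $a_1\leq a_2\leq\cdots\leq a_n$. The ring $R/I$ is a quotient of the complete intersection $R/(f_1,\cdots,f_n)$, whose Hilbert series equals $\prod_{k=1}^n(1+t+\cdots+t^{a_k-1})$, the Hilbert series of $B$; hence $H(R/I,i)\leq H(B,i)$ for all $i$. The Clements--Lindström theorem says that the Hilbert functions realized by ideals of $B$ are exactly those satisfying a Macaulay-type growth bound attached to the sequence $\mathbf{a}$, and that lex-segment ideals realize the extremal growth. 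Consequently, producing an ideal $J\supset(x_1^{a_1},\cdots,x_n^{a_n})$ with the same Hilbert function as $R/I$ is equivalent to showing that the Hilbert function of $R/I$ obeys this Clements--Lindström growth bound. This restatement is what I would try to prove directly.

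To establish the growth bound I would argue by induction on $n$, the length of the regular sequence. The case $n=1$ is immediate. In the inductive step, if some $a_i=1$ then $f_i$ is a nonzero linear form; after a linear change of coordinates one may take $f_i=x_n$, and passing to $R/(x_n)\cong K[x_1,\cdots,x_{n-1}]$ reduces to a regular sequence of the remaining degrees, so the induction hypothesis applies. The essential case is therefore $a_k\geq 2$ for every $k$. Here I would choose a linear form $\ell$ that is a nonzerodivisor on $R/I$ (such $\ell$ exists after harmlessly enlarging $K$, since Hilbert functions are unchanged by base field extension) and exploit the short exact sequence relating $R/I$ and $R/(I,\ell)$ by multiplication by $\ell$. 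Taking first differences, the goal would be to show that the first difference of the Hilbert function of $R/I$ is itself admissible for the shortened degree data in the $(n-1)$-variable ring $R/(\ell)$, and then to recover the full bound by summation.

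The step I expect to be the genuine obstacle is exactly this passage from an arbitrary regular sequence to the pure powers. Gröbner degeneration replaces $I$ by an initial ideal with the same Hilbert function, but the initial terms $\mathrm{in}(f_k)$ are only \emph{some} monomials of degree $a_k$, not the pure powers $x_k^{a_k}$; there is in general no flat family specializing the chosen regular sequence to $(x_1^{a_1},\cdots,x_n^{a_n})$, and the auxiliary form $\ell$ interacts with the $f_k$ in a way that need not preserve the prescribed degrees of a regular sequence in $R/(\ell)$. Controlling this interaction, so that replacing the given regular sequence by the pure-power one only shrinks the growth of the Hilbert function, is the heart of the matter and is precisely where the conjecture remains open. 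For this reason I would expect the method to yield only the known partial results unconditionally: the case of widely spaced degrees $a_i\geq 1+\sum_{k<i}(a_k-1)$, where the degeneration can be arranged so that $\mathrm{in}(f_i)=x_i^{a_i}$, as in the original work of Eisenbud, Green and Harris~\cite{eisenbud_green_harris}, and the case in which the $f_k$ are products of linear forms, which is Theorem~\ref{ci_of_product_of_linear_forms} of the present paper.
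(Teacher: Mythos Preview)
The statement you are attempting to prove is a \emph{conjecture}, and the paper does not prove it: it is merely recalled and then used as a hypothesis in Theorem~\ref{main_theorem}. There is therefore no ``paper's own proof'' to compare against. Your write-up is honest about this---you yourself say that the key step ``is precisely where the conjecture remains open'' and that your method would yield ``only the known partial results unconditionally''---so what you have submitted is a strategy sketch, not a proof, and it should not be presented as one.

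One technical slip is worth flagging. You propose to choose a linear form $\ell$ that is a nonzerodivisor on $R/I$. But $I$ contains a regular sequence of length $n$ in the $n$-variable polynomial ring, so $R/I$ is Artinian and every element of its maximal ideal is a zerodivisor; no such $\ell$ exists. The usual hyperplane-section arguments in this setting work instead with a \emph{general} linear form and the four-term exact sequence that accounts for the kernel $(I:\ell)/I$, or else with a filtration/linkage argument; either way one must control an extra term, and it is exactly here that the difficulty you identify reappears. This does not change your overall conclusion, but the inductive mechanism you describe needs to be reformulated before it even gets to the genuine obstacle.
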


\begin{theorem}  \label{main_theorem} 
Suppose that Eisenbud-Green-Harris Conjecture is true for a graded complete intersection $A$ over a field $K$.  
Then $A$ has the Sperner property.  
\end{theorem}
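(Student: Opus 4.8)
The plan is to reduce the Sperner property of the complete intersection $A$ to the matching property of the associated monomial complete intersection $B$, using the EGH Conjecture as the bridge between Hilbert functions of quotients of $A$ and quotients of $B$. First I would write $A = R/\ga$ with $R = K[x_1, \ldots, x_n]$ and $\ga = (f_1, \ldots, f_n)$ a regular sequence of degrees $a_1, \ldots, a_n$, and set $B = R/(x_1^{a_1}, \ldots, x_n^{a_n})$. Since $A$ and $B$ are complete intersections with generators of the same degrees, they have the same Hilbert function; in particular the common Hilbert function is symmetric and unimodal, so $\max\{\dim_K A_i\} = \dim_K A_{j_0}$ where $j_0 = \min\{j \mid \dim_K A_j > \dim_K A_{j+1}\}$, and this value equals the corresponding quantity for $B$.

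The key step is to show $A$ has the matching property, i.e. $\dim_K V \leq \dim_K (A_1 \cdot V)$ for every subspace $V \subset A_j$ whenever $\dim_K A_j \leq \dim_K A_{j+1}$. Given such a $V$, let $I$ be the ideal of $A$ generated by $V$; then $\dim_K V = H(I,j)$ and $\dim_K(A_1 \cdot V) = H(I, j+1)$, so it suffices to find an ideal in $B$ with the same Hilbert function in degrees $j$ and $j+1$ as $I$. Pulling $I$ back to an ideal $\gb \subset R$ containing $\ga$, the EGH Conjecture (applied to the regular sequence $f_1, \ldots, f_n$ contained in $\gb$) furnishes a graded ideal $\mathfrak{c} \subset R$ containing $(x_1^{a_1}, \ldots, x_n^{a_n})$ with $H(R/\gb, \cdot) = H(R/\mathfrak{c}, \cdot)$, equivalently $H(\gb, \cdot) = H(\mathfrak{c}, \cdot)$. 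Let $J$ be the image of $\mathfrak{c}$ in $B$; then $H(I, i) = H(J, i)$ for all $i$. Now $B$ is a monomial complete intersection, which has the matching property by Proposition~\ref{prop_matching_property}; since $\dim_K B_j = \dim_K A_j \leq \dim_K A_{j+1} = \dim_K B_{j+1}$, applying the matching property of $B$ to the subspace of $B_j$ spanned by $J_j$ gives $H(J, j) \leq H(J, j+1)$, and hence $H(I,j) \leq H(I,j+1)$, as required. Thus $A$ has the matching property.

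Finally, since $A$ is Artinian Gorenstein with a unimodal Hilbert function and the matching property, Proposition~\ref{matching_implies_sperner} yields that $A$ has the Sperner property, completing the proof.

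The main obstacle is the translation step: one must be careful that the EGH statement about ideals of $R$ containing a regular sequence transfers cleanly to the Hilbert-function equality $H(I, i) = H(J,i)$ for the corresponding ideals of $A$ and $B$, and that the matching property of $B$ is being invoked for exactly the right degree $j$ (using that $A$ and $B$ share a Hilbert function, so the hypothesis $\dim_K A_j \leq \dim_K A_{j+1}$ is equivalent to $\dim_K B_j \leq \dim_K B_{j+1}$). Everything else is bookkeeping, and the mild subtlety that the EGH Conjecture as stated needs only to hold for the specific complete intersection $A$ in question — which is exactly the hypothesis of the theorem.
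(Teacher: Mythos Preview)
Your proof is correct and follows essentially the same route as the paper: establish the matching property for $A$ by transferring, via EGH, to the monomial complete intersection $B$ (where Proposition~\ref{prop_matching_property} applies), and then invoke Proposition~\ref{matching_implies_sperner}. The one step to make explicit is that the matching property applied to $J_j \subset B_j$ gives only $\dim_K J_j \leq \dim_K (B_1 \cdot J_j)$, and you then need the trivial inclusion $B_1 \cdot J_j \subset J_{j+1}$ (since $J$ need not be generated in degree $j$) to conclude $H(J,j) \leq H(J,j+1)$; the paper handles exactly this point by introducing the subideal $J' \subset J$ generated by $J_j$.
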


\begin{proof} 
Let $A=\bigoplus _{i=0}^c A_i$ be a graded complete intersection defined by the ideal generated by homogeneous elements 
\[f_1, f_2, \cdots, f_n \in R=K[x_1, x_2, \cdots, x_n]\]
of degrees $a_1, a_2, \cdots, a_n$ respectively.  
Let $j$ be an integer such that $\dim _K A_j \leq \dim _K A_{j+1}$ and 
let $V \subset A_j$ be any vector subspace. 
By Proposition~\ref{matching_implies_sperner} it is enough to show that 
\[\dim _K V \leq \dim _K (A_1\cdot V).\] 
Let $I=VA$, namely,  $I$ is the ideal in  $A$ generated by $V$. 
Then 
\[\dim _K V= \mu(I), \;  \; \dim _K (A_1\cdot V) = \mu(\gm I).\]
By Proposition~\ref{hilbert_series_vs_numgens}, which we prove below, we have   
\begin{equation}\label{eq_1}
\mu(I) = \dim _K V= H(A,j)-H(A/I, j),
\end{equation}
and    
\begin{equation}\label{eq_2} 
\mu(\gm I) = \dim _K (A_1\cdot V) = H(A,j+1)-H(A/I, j+1). 
\end{equation}

Let $\ga \subset R$ be a graded ideal such that 
$A/I=R/\ga$, where $\ga \supset (f_1, f_2, \cdots, f_n)$.  
Now we invoke the EGH Conjecture for $A$, which is to assume the following: 
There is an ideal $\gb$ containing $(x_1 ^{a_1}, \cdots, x_n^{a_n})$ such that $R/\gb$ has the same Hilbert 
function as that of $R/\ga$. 
Put $B=R/(x_1^{a_1}, \cdots, x_n^{a_n})$ and  $J=\gb/(x_1^{a_1}, \cdots, x_n^{a_n})$.  We regard $J$ as an ideal in $B$. 
By definition $I$ has initial degree $j$, so $j$ is the initial degree of $J$.  
Let $J'$ be the ideal of $B$ generated by the elements of $J_j$.

We want to show that 
$\mu(\gm I) - \mu(I) \geq 0$. 
By (\ref{eq_1}) and (\ref{eq_2})
\begin{equation*}
\mu(\gm I)-\mu(I)  =   \{H(A, j+1) - H(A/I, j+1)\} -\{H(A, j) - H(A/I, j)\}. 
\end{equation*}
Since $A$ and $B$ have the same Hilbert function and so do $A/I$ and $B/J$, this is equal to  
\begin{eqnarray}   \label{eq_3}   
\mu(\gm I)-\mu(I)  =  \{H(B, j+1) - H(B/J, j+1)\} -\{H(B, j) - H(B/J, j)\}.  
\end{eqnarray}
As well as for $I$, we have  
\begin{equation} \label{eq_10}
\mu(\gm J')-\mu(J ')   =   \{H(B, j+1) - H(B/J ', j+1)\} -\{H(B, j) - H(B/J ', j)\},  
\end{equation} 
where we used the same $\gm$ for the maximal ideal of $B$. 
Note that $\mu(J')=\dim _K J'_j$ and $\mu(\gm J') = \dim _K (A_1 \cdot J')$. 
Since $B$ has the matching property (Proposition~\ref{prop_matching_property}), 
we have  $\mu(\gm J') - \mu(J') \geq 0$, which implies that 
the RHS of (\ref{eq_10}) is $ \geq 0$. 
Since $J ' \subset J$,  we have 
\[H(B/J ', j+1) - H(B/J, j+1) \geq 0.\]      
Add this number to the RHS of  (\ref{eq_10}). Then  we obtain 
\[\{H(B, j+1) - H(B/J, j+1)\} -\{H(B, j) - H(B/J ', j)\}  \geq 0.\]
Since $J '_j = J _j$, we have 
\[\{H(B, j+1) - H(B/J, j+1)\} -\{H(B, j) - H(B/J, j)\}  \geq 0,\]
which implies that the LHS of (\ref{eq_3}) is $\geq 0$. 
Thus we have proved that 
\[\dim _KV \leq \dim _K(A_1\cdot V),\]
as desired. 
\end{proof}
\begin{proposition} \label{hilbert_series_vs_numgens}  
Let  $R=K[x_1, \cdots, x_n]$  be the polynomial ring and $\ga$ a graded ideal of finite colength.
Put $A=R/\ga$.  Let $I$ be an ideal of $A$ generated by a subset in $A_{\al}$.  

Then 
\[\mu_A(\gm ^i I)=H(A, \al+i) - H(A/I, \al+i).\]
\end{proposition}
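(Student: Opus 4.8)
The plan is to set up an exact sequence that directly compares $I$ with $A$ and $A/I$ in a fixed degree, and read off the minimal number of generators of $\gm^iI$ from a Hilbert-function count. The key observation is that, because $I$ is generated by elements of degree $\al$, the ideal $\gm^iI$ is generated by elements of degree $\al+i$; concretely, $(\gm^iI)_{\al+i} = A_i\cdot I_\al$ generates $\gm^iI$, and the generators in that single degree. So $\mu_A(\gm^iI) = \dim_K (\gm^iI)_{\al+i} - \dim_K(\gm\cdot\gm^iI)_{\al+i}$, but since everything lives in degrees $\geq \al+i$ and the generators all sit in the bottom degree $\al+i$, in fact $\mu_A(\gm^iI) = \dim_K (\gm^iI)_{\al+i}$.

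The main step is then to compute $\dim_K(\gm^iI)_{\al+i}$. I would use the short exact sequence of graded $A$-modules
\[
0 \longrightarrow I \longrightarrow A \longrightarrow A/I \longrightarrow 0,
\]
which gives $\dim_K I_{\al+i} = H(A,\al+i) - H(A/I,\al+i)$ in each degree. It remains to identify $(\gm^iI)_{\al+i}$ with $I_{\al+i}$. Since $I$ is generated in degree $\al$, we have $I_{\al+i} = A_i\cdot I_\al$, and also $(\gm^iI)_{\al+i} = A_i\cdot I_\al$ (the degree-$i$ part of $\gm^i$ times the degree-$\al$ generators, as no lower pieces of $\gm^i$ contribute in this degree). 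Hence $(\gm^iI)_{\al+i} = I_{\al+i}$, and combining the two displays yields
\[
\mu_A(\gm^iI) = \dim_K I_{\al+i} = H(A,\al+i) - H(A/I,\al+i),
\]
as claimed.

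I expect the main obstacle to be the bookkeeping around $\mu$ versus a single-degree dimension: one has to argue carefully that $\gm^iI$ has \emph{no} minimal generators above degree $\al+i$, i.e. that the natural surjection $A_i\otimes_K I_\al \to (\gm^iI)$ already hits a generating set, so that $\mu_A(\gm^iI) = \dim_K(\gm^iI/\gm\cdot\gm^iI) = \dim_K(\gm^iI)_{\al+i}$. This follows from the fact that $\gm^iI = (A_i\cdot I_\al)A$ and $A_i\cdot I_\al$ lives entirely in degree $\al+i$, but it should be stated explicitly. The rest — the exact sequence count and the identification $I_{\al+i}=A_i\cdot I_\al$ for an ideal generated in degree $\al$ — is routine.
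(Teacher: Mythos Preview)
Your proposal is correct and follows essentially the same route as the paper's proof: both identify $\mu_A(\gm^iI)=\dim_K(\gm^iI/\gm^{i+1}I)$ with $\dim_K I_{\al+i}$ and then read off $H(I,\al+i)=H(A,\al+i)-H(A/I,\al+i)$ from the exact sequence. The paper's argument is a terse three-line computation, whereas you spell out explicitly why $(\gm^iI)_{\al+i}=A_iI_\al=I_{\al+i}$ and why $\gm^iI$ has all its minimal generators in that single degree; this extra care is justified but does not change the strategy.
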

\begin{proof}
\begin{eqnarray*}
\mu _A(\gm ^{i} I) & = &  \dim _K \gm ^{i}I/\gm ^{i+1}I = \dim _K I _{\al+i}  \\
                & = & H(I, \al+i)  \\                
                & = & H(A, \al+i) - H(A/I, \al+i)                 
\end{eqnarray*}
\end{proof}

\begin{theorem}\label{ci_of_product_of_linear_forms}
Let $R=K[x_1, x_2, \cdots, x_n]$ be the polynomial ring. 
Suppose that $I$ is a complete intersection ideal  
  $I=(L_1, L_2, \cdots,  L_n)$, 
where each $L_j$  is a product of linear forms. Then $A:=R/I$ has the Sperner property.
\end{theorem}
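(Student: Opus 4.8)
The plan is to deduce Theorem~\ref{ci_of_product_of_linear_forms} from Theorem~\ref{main_theorem} by verifying that the EGH conjecture holds for the particular complete intersection $A = R/I$ with $I=(L_1,\dots,L_n)$, each $L_j$ a product of linear forms. Once EGH is known for this $A$, Theorem~\ref{main_theorem} immediately gives the Sperner property, so the entire content is reduced to an instance of EGH that is already in the literature. Concretely, I would cite the fact (due to Eisenbud--Green--Harris, and recorded in subsequent work, e.g. the Abedelfatah or Caviglia--Maclagan circle of results) that the EGH conjecture is true whenever the regular sequence $f_1,\dots,f_n$ consists of products of linear forms; this is one of the classical cases in which EGH has been established.

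The key steps, in order, are: (1) recall that $L_1,\dots,L_n$, being a regular sequence of homogeneous polynomials of degrees $a_1,\dots,a_n$ (where $a_j=\deg L_j$), generate a complete intersection ideal, so $A=R/I$ is a graded Artinian complete intersection to which Theorem~\ref{main_theorem} applies provided EGH holds for it; (2) state the EGH property needed: for every graded ideal $\ga \supset I = (L_1,\dots,L_n)$ there is an ideal $\gb \supset (x_1^{a_1},\dots,x_n^{a_n})$ with $R/\gb$ and $R/\ga$ having the same Hilbert function; (3) invoke the known case of EGH for complete intersections of products of linear forms, which is exactly this statement; (4) apply Theorem~\ref{main_theorem} to conclude that $A$ has the Sperner property.

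The only substantive point — and the step I would flag as the potential obstacle — is step~(3): making sure the precise form of EGH proven in the literature matches what Theorem~\ref{main_theorem} consumes. Theorem~\ref{main_theorem} only needs EGH for the specific complete intersection $A$, i.e. for ideals $\ga$ containing the given regular sequence $(L_1,\dots,L_n)$, not the full EGH for arbitrary regular sequences of those degrees; the ``products of linear forms'' case in the literature is typically stated (and proved, via a deformation/linkage or a direct distraction argument) in exactly this generality, so the match should be clean, but it is worth stating carefully. Once that citation is in place the argument is a one-line application.

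\begin{proof}
Write $a_j = \deg L_j$ for $j=1,\dots,n$. Since $I=(L_1,L_2,\cdots,L_n)$ is a complete intersection ideal, $L_1,\dots,L_n$ is a regular sequence of homogeneous polynomials of degrees $a_1,\dots,a_n$, and $A=R/I$ is a graded Artinian complete intersection. By Theorem~\ref{main_theorem}, it suffices to show that the EGH Conjecture holds for $A$; that is, that for every graded ideal $\ga \subset R$ with $\ga \supset (L_1,\dots,L_n)$ there exists an ideal $\gb \supset (x_1^{a_1},\cdots,x_n^{a_n})$ such that $R/\gb$ and $R/\ga$ have the same Hilbert function. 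But this is precisely the case of the EGH Conjecture in which the regular sequence $f_1,\dots,f_n$ consists of products of linear forms, which is known to be true (\cite{eisenbud_green_harris}). Hence the hypothesis of Theorem~\ref{main_theorem} is satisfied for $A$, and therefore $A=R/I$ has the Sperner property.
\end{proof}
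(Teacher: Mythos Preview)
Your proposal is correct and takes essentially the same approach as the paper: verify that EGH holds for complete intersections generated by products of linear forms, then apply Theorem~\ref{main_theorem}. The only discrepancy is the citation in your final proof: the paper cites Abedelfatah~\cite{abed_abedelfatah}, Corollary~3.3, for this case of EGH, whereas you cite \cite{eisenbud_green_harris}, which is where the conjecture is \emph{stated}, not where the products-of-linear-forms case is \emph{proved}; since you already name Abedelfatah in your discussion, just use that reference in the proof itself.
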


\begin{proof}
By \cite{abed_abedelfatah} Corollary~3.3, the EGH Conjecture is true for $A$. So Theorem~\ref{main_theorem}  applies. 
\end{proof}

\begin{remark}  
\rm{In the above theorem we may require only that $(n-2)$ of the generators 
are products of linear forms and  the other two are arbitrary homogeneous elements. 
Indeed,  to prove the theorem we may assume that $K$ is algebraically closed. 
The proof is carried out by induction on $n$. If $n=2$, then any homogeneous form factors into a 
product of linear forms over $K$.  
}
\end{remark}


\end{document}